\newtheorem{thm}{Theorem}[section]
\newtheorem{cor}[thm]{Corollary}
\newtheorem{lem}[thm]{Lemma}
\theoremstyle{definition}
\newtheorem{defn}{Definition}[section]
\theoremstyle{remark}
\newtheorem{rem}{Remark}[section]
\newcommand{\N}{\ensuremath{\mathbb{N}}}
\newcommand{\U}{\ensuremath{\mathbb{U}}}
\newcommand{\Uinf}{\ensuremath{\mathbb{U}_{0,\infty}}}
\newcommand{\Ustrich}{\ensuremath{\mathbb{U}_0'}}
\newcommand{\Z}{\ensuremath{\mathbb{Z}}}
\newcommand{\bra}[1]{\ensuremath{[{#1}]}}
\newcommand{\brainf}[1]{\ensuremath{[{#1}]_\infty}}
\newcommand{\bx}{\ensuremath{[ x ]}}
\newcommand{\bTx}{\ensuremath{[ Tx ]}}
\newcommand{\Tstar}{\ensuremath{T^*}}
\begin{document}

\title{On the $3x+1$ conjecture. }

\author{Peter Hellekalek\thanks{The author is supported by the Austrian Science Fund (FWF):
Project F5504-N26, which is a part of the Special Research Program 
"Quasi-Monte Carlo Methods: Theory and Applications}}

\date{\small Dedicated to Harry (Hillel) Furstenberg}
\date{\today}

\maketitle

\renewcommand{\thefootnote}{}
\footnote{2010 \emph{Mathematics Subject Classification}: Primary 11T71; Secondary 94A60.}
\footnote{\emph{Key words and phrases}: 3x+1, Collatz problem, Syracuse problem}
\renewcommand{\thefootnote}{\arabic{footnote}}
\setcounter{footnote}{0}

\begin{abstract}
In this paper, we discuss the well known $3x+1$ conjecture 
in form of the accelerated Collatz function $T$
defined on the
positive odd integers.
We present a sequence of quotient spaces and further, an invertible map,
which are intrinsically related to the behavior of $T$.
This approach allows to express the $3x+1$ conjecture in  form of 
equivalent problems, which might be more accessible than the original conjecture.
\end{abstract}

\section{Introduction}\label{intro}
 Let $\N_0$ stand for the nonnegative integers, $\N_0=\{0, 1, \ldots \}$,
 let $\U = 2\N_0+1$ be the set of odd positive integers,
 and let
 \begin{equation}\label{def:CollatzFunction}
 T: \U \rightarrow \U,\nonumber \quad
    Tx = (3x+1) 2^{-\nu_2(3x+1)},
 \end{equation}
 where $\nu_2(y)$ denotes the exponent of the largest power of $2$ that divides the integer $y$.
 The map  $T$ is called the reduced or accelerated Collatz function in the literature
 (see \cite{Lagarias10a}).
 Hence, $T17=13$, $T13=5$, and $T5=1$.

 The $3x+1$ conjecture, also known as the Collatz conjecture, 
 states that, starting from any $x\in \U$,
 by iterating $T$ we will eventually end up in the number $1$.
 In other words, for every $x\in \U$,
 there exists $k=k(x)\in \N_0$ such that $T^k x=1$.
Here, $T^0$ stands for the identity map,
and, for $k\in \N$,
$T^k$ is defined recursively by $T^k=T\circ T^{k-1}$.
To give an example, $T^3 17=1$.
We refer the reader to the comprehensive monograph \cite{Lagarias10a}
for details on the $3x+1$ conjecture and for the numerous aspects that have been
studied in this context.

In this paper, 
we show how to associate with $T$ an invertible map $T^*$ on a certain quotient space 
that consists of equivalence classes of odd integers. 
The properties of $T^*$ reflect the behavior of $T$, see Theorem \ref{thm:propertiesOfT}.
Further,
we exhibit several statements that are equivalent to the 3x+1 conjecture,
see Corollary \ref{cor:equivalentresults}.
In the appendix, we present additional concepts to describe the action of  $T$
on $\U_0$.

Our approach for the accelerated Collatz map $T$
may be of interest for any such many-to-one map with a unique fixed point.

\section{Results}\label{s:preliminaries}

For the sake of better readability,
we will write $Tx$ instead of $T(x)$ for the image of $x$ under $T$.
The same slight abuse of notation will apply to the functions $S$  and $f$ below.
All other functions will be written as usual.

\begin{rem}
The outline of our approach is the following:
\begin{enumerate}
\item First, we introduce a map $S: \U\rightarrow \U$ that allows to describe the inverse image
$T^{-1}\{y\}$ of a point $y\in \U$, $y\not\equiv 0  \pmod{3}$, completely.

\item The next idea is to restrict $T$ to the subset $\U_0$ of $\U$. 
$T$ is surjective on $\U_0$, hence we may
introduce the \emph{inverse map} $\tau$, inverse in the sense  $T\circ \tau$ being the identity map on $\U_0$.

\item We then study a sequence of equivalence relations $``\sim_n$'', $n\ge 0$, 
which yields, for every $x\in \U_0$, an increasing sequence of equivalence classes $(\bx_n)_{n\ge 0}$ and
an associated decreasing sequence of positive integers $(\delta_n(x))_{n\ge 0}$.

\item The next idea is to study an equivalence relation $``\sim_\infty$'' on $\U_0$,
which leads to a partition of $\U_0$ into equivalence classes $\bx_\infty$, $x\in \U_0$,
and to minimal elements $\delta_\infty(x)$ with the property $\delta_\infty(x)=\lim_{n\to \infty} \delta_n(x)$.
Further, a bijective map $T^*$ that \emph{mimics} the behavior of $T$ may be defined on the
quotient space $\U_0/\sim_\infty$.

\item The $3x+1$ conjecture is then equivalent to $\U_0 = \bra{1}_\infty$,
and also equivalent to $\delta_\infty(x) = 1$, for all $x\in \U_0$.

\item In the appendix, 
we analyze a closely related equivalence relation on $\U_0$, which results in a partition of $\U_0$ into
\emph{$T$-invariant} subsets.

\item Also in the appendix, we extend the map $T$ to an invertible map $f$ on $\U_0$.
In addition, we provide some concepts for ``bookkeeping'' concerning the classes
$\bra{f^k x}_n$ and the positive integers $\delta_n(f^k x)$, 
where $k\in \Z$ and $n\ge 0$.
\end{enumerate}
\end{rem}

\begin{rem}
Our abstract approach sheds some light on the behavior of the map $T$.
Why do we fail to prove the $3x+1$ conjecture or, at least, some partial results?
What is missing in our study  are \emph{quantitative} results,
for example
\begin{enumerate}
\item A description of the growth behavior of the classes $\bra{1}_n$, in dependence of $n$.
(This might yield a result on the \emph{density} of $\bra{1}_\infty$ in $\U_0$.)
\item Number-theoretical arguments proving --for an appropriate notion of distance-- that the distance between
the class $\bra{1}_\infty$ and each class $\bx_\infty$ can be made arbitrarily small.
Equivalently, one could try to show that the assumption $\delta_\infty(x)> 1$ leads to a contradiction.
(This would yield $\bra{1}_\infty = \bx_\infty$ for all $x\in \U_0$, 
thereby proving the $3x+1$ conjecture.)
\item (Number-theoretic) Arguments showing that there are no periodic points $\bx_\infty$ of $T^*$ with a period larger 
or equal to 2.
(This would imply that there is no periodic point of $T$ with period larger or equal to 2,
which is yet unknown.)
\end{enumerate}
\end{rem}

After this outline of concepts and shortcomings in our approach,
let us look at the details.

\begin{rem}\label{rem:rem1}
The following properties of the map $T$ are well known and elementary to prove.

\begin{enumerate}
\item From the definition of $T$,
we derive the equivalence
\begin{equation}\label{eqn:basic}
y=Tx \ \Leftrightarrow\  3x+1 = y 2^{\nu_2(3x+1)}.
\end{equation}

\item For every $y\in \U$ with $y\equiv 0 \pmod{3}$,
\[
T^{-1}\{y\} = \emptyset.
\]
This follows  from  (\ref{eqn:basic}) for the simple
reason that $3x+1\equiv 1 \pmod{3}$,
whereas $y2^{\nu_2(3x+1)}\equiv 0 \pmod{3}$.

\item The element $1$ is the unique fixed point of $T$, i.e.
\[
\{x\in \U: Tx=x\} = \{1 \}.  
\]
Again, this is a direct consequence of  (\ref{eqn:basic}).
\end{enumerate}
\end{rem}

The following map allows to describe the behaviour of $T$.

\begin{defn}
We define $ S:  \U\rightarrow \U$ as
$Sx = 4x+1$.
 \end{defn}
 
The map $S$ permutes the residue classes modulo 3:
if $x\equiv a \pmod{3}$, then $Sx\equiv a+1 \pmod{3}$.
This simple property will prove to be essential for defining an inverse map associated with $T$,
see Definition \ref{defn:Tinverse}.

The next lemma is part of the `folklore' in the $3x+1$ community.
We present a simple proof, for  the sake of completeness.

 \begin{lem}\label{lem:B}
 For all $x\in \U$, we have
 \begin{equation}
 Tx =  T ( S x).
 \end{equation}
 \end{lem}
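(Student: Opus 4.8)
The plan is to verify the identity $Tx = T(Sx)$ by unwinding the definition of $T$ and tracking the $2$-adic valuations. Writing $y = Tx$, the equivalence (\ref{eqn:basic}) gives $3x+1 = y\,2^{\nu_2(3x+1)}$. Now I compute $3(Sx)+1 = 3(4x+1)+1 = 12x+4 = 4(3x+1)$. Hence $\nu_2(3(Sx)+1) = \nu_2(4(3x+1)) = 2 + \nu_2(3x+1)$, and therefore
\[
T(Sx) = (3(Sx)+1)\,2^{-\nu_2(3(Sx)+1)} = 4(3x+1)\cdot 2^{-2-\nu_2(3x+1)} = (3x+1)\,2^{-\nu_2(3x+1)} = Tx.
\]

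The only point requiring a moment's care is that $Sx = 4x+1$ is again an odd positive integer whenever $x$ is, so that $T(Sx)$ is well-defined; this is immediate since $4x+1$ is odd. Everything else is the elementary arithmetic above: the key observation is simply that applying $S$ multiplies $3x+1$ by exactly $4 = 2^2$, which shifts the $2$-adic valuation by $2$ but leaves the odd part unchanged.

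There is essentially no obstacle here — the statement is, as the authors note, folklore, and the proof is a one-line valuation computation. If anything, the "hard part" is purely cosmetic: one wants to present the chain of equalities cleanly rather than invoking (\ref{eqn:basic}) in both directions, so I would simply substitute $Sx$ directly into the formula $Tx = (3x+1)2^{-\nu_2(3x+1)}$ defining $T$ and simplify, as displayed above.
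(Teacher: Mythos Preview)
Your proof is correct and follows essentially the same approach as the paper: both compute $3(Sx)+1 = 12x+4 = 4(3x+1)$, observe that this shifts the $2$-adic valuation by exactly $2$, and conclude that the odd part is unchanged. Your explicit remark that $Sx\in\U$ (so that $T(Sx)$ is defined) is a small bonus the paper omits, but otherwise the arguments are identical.
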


 \begin{proof}
 We have
 \[
 T (S x) = T(4x+1) =  (3x+1) 2^{2-\nu_2(12x+4)}.
 \]
Trivially, $\nu_2(12x+4) = 2+\nu_2(3x+1)$.
 \end{proof}

\begin{cor}\label{cor:Sj}
Lemma \ref{lem:B} implies for all $x\in \U$ that $x$ and its iterates
$S^k x$ are mapped to $Tx$:
\[
\forall x\in \U, \forall k\ge 0: \quad Tx= T( S^k x).
\]
In other words, $T= T\circ S^k$ on $\U$, for all $k\ge 0$.
\end{cor}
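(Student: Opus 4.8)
The statement to prove is Corollary~\ref{cor:Sj}, which asserts that $T = T \circ S^k$ on $\U$ for all $k \ge 0$. This is an immediate consequence of Lemma~\ref{lem:B} by induction, so the proof should be short.

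\medskip

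\textbf{Plan.} The plan is to prove the identity $Tx = T(S^k x)$ for every $x \in \U$ by induction on $k$. First I would handle the base case $k = 0$: since $S^0$ is the identity map on $\U$, the claim $Tx = T(S^0 x) = Tx$ is trivial. For the inductive step, I would assume that $Tx = T(S^k x)$ holds for all $x \in \U$ for some fixed $k \ge 0$, and then show $Tx = T(S^{k+1} x)$. The key observation is that $S^{k+1} x = S(S^k x)$, and since $S^k x \in \U$ (as $S$ maps $\U$ into $\U$, because $4x+1$ is odd whenever $x$ is an integer), we may apply Lemma~\ref{lem:B} with $S^k x$ in place of $x$ to get $T(S^{k+1}x) = T\bigl(S(S^k x)\bigr) = T(S^k x)$. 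Combining this with the inductive hypothesis $T(S^k x) = Tx$ yields $T(S^{k+1} x) = Tx$, completing the induction.

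\medskip

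\textbf{Main obstacle.} There is no real obstacle here; the only point that requires a moment's care is confirming that $S$ indeed maps $\U$ into itself, so that Lemma~\ref{lem:B} may legitimately be applied to the iterates $S^k x$ rather than only to $x$ itself. Since $x$ odd implies $4x + 1$ odd, we have $Sx \in \U$, and hence $S^k x \in \U$ for all $k \ge 0$, which is exactly what the inductive step needs. The final sentence of the corollary, $T = T \circ S^k$ on $\U$, is then just a restatement of the pointwise identity just established.
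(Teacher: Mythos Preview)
Your proof is correct and is exactly the natural induction argument that the paper implicitly invokes: the paper gives no separate proof for this corollary, stating only that it follows from Lemma~\ref{lem:B}, and your induction on $k$ is the obvious way to make that precise.
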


\begin{rem}\label{rem:rem2}
By induction for $k$ we see that
\[
\forall x\in \U, \forall k\ge 0: \quad  S^k x = 4^k x + (4^k-1)/3.
\]
\end{rem}

\begin{defn}\label{defn:inverseofy}
For  $y\in \U$ with $y\not\equiv 0 \pmod{3}$,
let $\xi(y)$ denote the smallest element of $\U$ that is mapped to $y$ by $T$:
\[
\xi(y) = \min \{x\in \U: Tx=y\}.
\]
\end{defn}

\begin{lem}\label{lem:inverseofy}
Let $y\in \U$ with $y\not\equiv 0 \pmod{3}$.
Then $\xi(y)$ is given as follows.
\begin{enumerate}
\item \label{lem:inverseofy:1} If  $y\equiv 1 \pmod{3}$, then
$\xi(y) = (4y-1)/3$.
\item \label{lem:inverseofy:2} If $y\equiv 2 \pmod{3}$, then
$\xi(y) = (2y-1)/3$.
\end{enumerate}
\end{lem}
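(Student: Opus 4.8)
The plan is to use the basic equivalence (\ref{eqn:basic}) to parametrize the whole preimage $T^{-1}\{y\}$ explicitly, and then simply read off its smallest element. First I would note that for $x\in\U$ the number $3x+1$ is even, so $\nu_2(3x+1)\ge 1$, and that $y$ is odd. Hence, by (\ref{eqn:basic}), an $x\in\U$ satisfies $Tx=y$ if and only if $3x+1 = y\cdot 2^{j}$ for some integer $j\ge 1$; the exponent is then automatically $\nu_2(3x+1)$ because $y$ is odd. Equivalently, $x = (y\cdot 2^{j}-1)/3$ for some $j\ge 1$ for which the right-hand side is an integer. Conversely, whenever $j\ge 1$ and $(y\cdot 2^{j}-1)/3$ is an integer, it is automatically a \emph{positive odd} integer: positive because $y\cdot 2^{j}\ge 2$, and odd because $3x = y\cdot 2^{j}-1$ is odd. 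Thus $T^{-1}\{y\} = \{\,(y\cdot 2^{j}-1)/3 : j\ge 1,\ y\cdot 2^{j}\equiv 1 \pmod 3\,\}$.

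Next I would pin down for which $j$ the integrality condition $y\cdot 2^{j}\equiv 1\pmod 3$ holds. Since $2\equiv -1\pmod 3$, we have $y\cdot 2^{j}\equiv y\,(-1)^{j}\pmod 3$. In case \ref{lem:inverseofy:1}, $y\equiv 1\pmod 3$, so the condition becomes $(-1)^{j}\equiv 1\pmod 3$, i.e. $j$ even, and the smallest admissible exponent is $j=2$. In case \ref{lem:inverseofy:2}, $y\equiv 2\equiv -1\pmod 3$, so the condition becomes $(-1)^{j+1}\equiv 1\pmod 3$, i.e. $j$ odd, and the smallest admissible exponent is $j=1$.

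Finally, since $x = (y\cdot 2^{j}-1)/3$ is strictly increasing in $j$, the minimum of $T^{-1}\{y\}$ is attained at the smallest admissible $j$, giving $\xi(y) = (4y-1)/3$ in case \ref{lem:inverseofy:1} and $\xi(y) = (2y-1)/3$ in case \ref{lem:inverseofy:2}. There is no genuine obstacle here, the whole argument being elementary; the only point worth a moment's care is checking that the candidate $x$ really is an odd positive integer, which, as observed above, is free from $3x = y\cdot 2^{j}-1$ with $j\ge 1$. (The same parametrization also makes it manifest that $T^{-1}\{y\}\neq\emptyset$ in both cases, so that $\xi(y)$ is well defined.)
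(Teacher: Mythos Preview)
Your argument is correct and follows essentially the same line as the paper's proof: use (\ref{eqn:basic}) to see that $Tx=y$ forces $3x+1=y\cdot 2^{j}$, determine the parity of the admissible exponents $j$ from the residue of $y$ modulo $3$, and take the smallest one. You simply spell out in more detail what the paper leaves implicit (that the resulting $x$ is indeed in $\U$, and that monotonicity in $j$ pins down the minimum).
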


\begin{proof}
Suppose that $y\equiv 1 \pmod{3}$. 
Then  equivalence  (\ref{eqn:basic})
implies that $\nu_2(3x+1)$ has to be even.
The smallest solution in $\U$ to (\ref{eqn:basic}) is
the number $x$ with the property $\nu_2(3x+1)=2$.
This yields $\xi(y)=(4y-1)/3$.

The case $y\equiv 2 \pmod{3}$ is treated in the same manner.
\end{proof}

\begin{cor}
Suppose that $y\equiv 0 \pmod{3}$.
Then, for $z\in \{Sy, S^2 y \}$,
the preimage $T^{-1}\{ z\}$ is non-void.
\end{cor}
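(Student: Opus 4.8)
The plan is to exploit the single structural fact about $S$ that was already isolated just before Lemma~\ref{lem:B}: the map $S$ cyclically permutes the residue classes modulo $3$, i.e. $x\equiv a\pmod 3$ implies $Sx\equiv a+1\pmod 3$. First I would note that, since $S\colon \U\to\U$, both $Sy$ and $S^2y$ are again elements of $\U$, so it is legitimate to ask for their $T$-preimages. Starting from the hypothesis $y\equiv 0\pmod 3$ and applying the residue-shift observation once gives $Sy\equiv 1\pmod 3$; applying it a second time gives $S^2y\equiv 2\pmod 3$. In particular, for each $z\in\{Sy,S^2y\}$ we have $z\not\equiv 0\pmod 3$.

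Once we are looking at an element $z\in\U$ with $z\not\equiv 0\pmod 3$, the conclusion is immediate from Lemma~\ref{lem:inverseofy}: that lemma produces an element $\xi(z)\in\U$ (namely $(4z-1)/3$ if $z\equiv 1\pmod 3$ and $(2z-1)/3$ if $z\equiv 2\pmod 3$) together with the identity $T\xi(z)=z$. Hence $\xi(z)\in T^{-1}\{z\}$, so $T^{-1}\{z\}\neq\emptyset$. Applying this to $z=Sy$ (the case $z\equiv 1\pmod 3$) and to $z=S^2y$ (the case $z\equiv 2\pmod 3$) yields the assertion. If one prefers an explicit witness, one can combine this with Remark~\ref{rem:rem2}, which gives $Sy=4y+1$ and $S^2y=16y+5$, so that $(16y+3)/3\in T^{-1}\{Sy\}$ and $(32y+9)/3\in T^{-1}\{S^2y\}$.

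I do not expect any real obstacle here; the statement is essentially a one-line consequence of the residue-permutation property of $S$ together with Lemma~\ref{lem:inverseofy}. The only point that requires a moment's care is the bookkeeping of residues: one must check that the two successive shifts land exactly on the classes $1$ and $2$ (rather than cycling back to $0$), and that in each case the correct branch of Lemma~\ref{lem:inverseofy} is invoked. It is also worth remarking, for context, that this complements item~(2) of Remark~\ref{rem:rem1}: although $y$ itself, being divisible by $3$, has empty preimage under $T$, its images $Sy$ and $S^2y$ under the auxiliary map $S$ \emph{do} have preimages, which is precisely the phenomenon that makes $S$ the right tool for building an inverse map associated with $T$ in Definition~\ref{defn:Tinverse}.
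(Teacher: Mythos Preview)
Your argument is correct and is exactly the intended one: the corollary is stated in the paper without proof precisely because it follows immediately from the residue-shift property of $S$ together with Lemma~\ref{lem:inverseofy}, as you spell out. Your explicit witnesses $(16y+3)/3\in T^{-1}\{Sy\}$ and $(32y+9)/3\in T^{-1}\{S^2y\}$ are also correct.
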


\begin{lem}
The set $T^{-1}\{Tx\}$ of those elements $z$ of $\U$ that are mapped to $Tx$
is given by  $\xi(Tx)$ and its iterates under $S$:
	\[
	\{ z\in \U: Tz=Tx\} = \{S^k \xi(Tx): \  k\ge 0  \}.
	\]
\end{lem}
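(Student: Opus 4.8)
The plan is to abbreviate $y=Tx$ and prove the two inclusions separately, the only non-trivial ingredient being the single computation $3(Sz)+1=4(3z+1)$.

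First I would record that $y\not\equiv 0\pmod 3$. Indeed, the set $\{z\in\U:Tz=y\}$ contains $x$, hence is non-void, so by Remark \ref{rem:rem1}(2) we cannot have $y\equiv 0\pmod 3$; consequently $\xi(y)$ is defined by Definition \ref{defn:inverseofy} and Lemma \ref{lem:inverseofy}. The inclusion ``$\supseteq$'' is then immediate: Corollary \ref{cor:Sj} gives $T(S^k\xi(y))=T\xi(y)=y=Tx$ for every $k\ge 0$, so each $S^k\xi(y)$ lies in the left-hand set.

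For ``$\subseteq$'' the key observation is that $S$ maps the solution set into itself while shifting a $2$-adic valuation. For any $z\in\U$ we have $3(Sz)+1=3(4z+1)+1=4(3z+1)$, hence $\nu_2(3(Sz)+1)=\nu_2(3z+1)+2$; combined with the equivalence (\ref{eqn:basic}) this shows that $Tz=y$ implies $T(Sz)=y$, and that the exponent $\nu_2(3z+1)$ increases by exactly $2$ under $S$. Next I would parametrize all solutions: by (\ref{eqn:basic}), the map $z\mapsto\nu_2(3z+1)$ is a bijection from $\{z\in\U:Tz=y\}$ onto $M_y:=\{m\ge 1:\ y2^m\equiv 1\pmod 3\}$ (injective because $3z+1=y2^m$ determines $z$; surjective because for $m\in M_y$ the number $z=(y2^m-1)/3$ is an odd integer with $\nu_2(3z+1)=m$, using that $y$ is odd). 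Since $2\equiv -1\pmod 3$, the set $M_y$ is the arithmetic progression $\{m_0,m_0+2,m_0+4,\dots\}$, where $m_0=2$ if $y\equiv 1\pmod 3$ and $m_0=1$ if $y\equiv 2\pmod 3$; by Lemma \ref{lem:inverseofy} the solution attached to the minimal exponent $m_0$ is precisely $\xi(y)$.

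Finally I would assemble the pieces. Starting from $\xi(y)$ (valuation $m_0$) and applying the shift observation $k$ times, the solution with valuation $m_0+2k$ must equal $S^k\xi(y)$, and since $S$ is injective these are pairwise distinct; as every solution has valuation in $\{m_0+2k:k\ge 0\}$, it follows that every $z$ with $Tz=y$ is of the form $S^k\xi(y)$, giving ``$\subseteq$''. (Alternatively, the identity $z=S^k\xi(y)$ for the solution of valuation $m_0+2k$ can be verified by direct substitution using the closed form $S^k x=4^k x+(4^k-1)/3$ from Remark \ref{rem:rem2}.) I do not expect a genuine obstacle; the only point needing a little care is checking that $z\mapsto\nu_2(3z+1)$ is a well-defined bijection onto $M_y$ and that the indices line up so that the minimal solution is exactly $\xi(y)$, which is what pins down the starting point of the $S$-orbit.
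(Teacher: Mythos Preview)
Your proposal is correct and follows essentially the same approach as the paper: parametrize the solutions of $Tz=y$ by the valuation $\nu_2(3z+1)$, observe that these valuations form an arithmetic progression with step~$2$ whose minimum corresponds to $\xi(y)$, and identify the solution at valuation $m_0+2k$ with $S^k\xi(y)$ (the paper does this last step by the direct computation you mention as an alternative, via Remark~\ref{rem:rem2}). Your write-up is in fact more complete than the paper's, which treats only the case $y\equiv 1\pmod 3$ explicitly and leaves the inclusion ``$\supseteq$'' implicit.
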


\begin{proof}
Suppose first that $y=Tx \equiv 1 \pmod{3}$, and assume that $Tz=Tx$.
It follows from (\ref{eqn:basic}) that $\nu_2(3z+1)\in \{2,4,6, \ldots \}$.
If $\nu_2(3z+1)=2$, then from  Lemma \ref{lem:inverseofy}, Part 1, 
it follows that $z=\xi(Tx)$.
If $\nu_2(3z+1)=4$, then 
\[
3z+1 = 2^4 y = 2^2 (3\xi(Tx)+1).
\]
This implies $z=S\xi(Tx)$.

In the general case, if $\nu_2(3z+1) = 2 + 2k$, with $k\ge 1$,
we have
\[
3z+1 = y 2^{2+2k} = (3\xi(Tx)+1) 4^k.
\]
It follows that
\[
z = 4^k \xi(Tx) + (4^k-1)/3,
\]
from which we derive by Remark \ref{rem:rem2} that $z=S^k \xi(Tx)$.
\end{proof}

\begin{lem}
The set
$\U_0 = \{x\in \U: x\not\equiv 0 \pmod{3}  \}$
has the properties $T\U=\U_0$ and $T\U_0 = \U_0$.
In particular, the map $T:\U_0 \rightarrow \U_0$ is surjective.
\end{lem}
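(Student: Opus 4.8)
The plan is to establish the chain of inclusions $\U_0 \subseteq T\U_0 \subseteq T\U \subseteq \U_0$, from which all three sets coincide and the surjectivity of $T:\U_0\to\U_0$ is immediate.

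First I would show $T\U\subseteq\U_0$. For any $x\in\U$, equivalence (\ref{eqn:basic}) gives $Tx\cdot 2^{\nu_2(3x+1)}=3x+1\equiv 1\pmod 3$; since $2$ is invertible modulo $3$, this forces $Tx\not\equiv 0\pmod 3$, i.e.\ $Tx\in\U_0$. (This is precisely Remark~\ref{rem:rem1}(2) read in the forward direction.) In particular $T\U_0\subseteq T\U\subseteq\U_0$, which already delivers the inclusion $T\U_0\subseteq\U_0$.

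Next I would show $\U_0\subseteq T\U_0$, i.e.\ that every $y\in\U_0$ admits a preimage under $T$ that itself lies in $\U_0$. Since $y\not\equiv 0\pmod 3$, Lemma~\ref{lem:inverseofy} guarantees that $\xi(y)\in\U$ exists with $T\xi(y)=y$, and by Corollary~\ref{cor:Sj} every iterate $S^k\xi(y)$ is mapped to $y$ as well. The key observation is that $S$ permutes the residue classes modulo $3$ cyclically ($x\equiv a\Rightarrow Sx\equiv a+1\pmod 3$), so among the three elements $\xi(y),\,S\xi(y),\,S^2\xi(y)$ exactly one lies in each residue class mod $3$; in particular at least one of them, say $z$, satisfies $z\not\equiv 0\pmod 3$, hence $z\in\U_0$ and $Tz=y$, so $y\in T\U_0$. (In fact the formulas of Lemma~\ref{lem:inverseofy} show $\xi(y)\equiv 0\pmod 3$ always, so $S\xi(y)$ and $S^2\xi(y)$ both work, but this refinement is not needed.)

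Combining the two steps yields $\U_0\subseteq T\U_0\subseteq T\U\subseteq\U_0$, whence $T\U=\U_0$ and $T\U_0=\U_0$, and $T:\U_0\to\U_0$ is surjective. There is no genuine obstacle here; the only point requiring a little care is that the canonical preimage $\xi(y)$ need not itself lie in $\U_0$, which is exactly why one passes to its $S$-orbit and exploits the cyclic action of $S$ on the residues modulo $3$.
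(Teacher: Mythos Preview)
Your argument is correct and follows essentially the same route as the paper: the inclusion $T\U\subseteq\U_0$ via Remark~\ref{rem:rem1}(2), the existence of $\xi(y)$ via Lemma~\ref{lem:inverseofy}, and the passage to an $S$-iterate (Lemma~\ref{lem:B}/Corollary~\ref{cor:Sj}) to land in $\U_0$. One small correction: your parenthetical claim that ``$\xi(y)\equiv 0\pmod 3$ always'' is false (e.g.\ $\xi(1)=1$, $\xi(11)=7$), though as you note this is irrelevant to the proof.
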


\begin{proof}
By Remark \ref{rem:rem1}(2), we have $T\U \subseteq \U_0$,
hence $T\U_0 \subseteq \U_0$.
If $y\in \U_0$, then by Lemma \ref{lem:inverseofy} there exists $x\in \U$ such that $Tx=y$.
Due to Lemma \ref{lem:B}, 
we may assume $x\in \U_0$.
\end{proof}

\begin{cor}
In order to prove the $3x+1$ conjecture,
it suffices to restrict the map $T$ to the set $\U_0$.
\end{cor}

Hence, from now on, we will study the 3x+1 conjecture for the surjective map
$T: \U_0 \rightarrow \U_0$.
We note that the surjectivity of $T$ implies for all subsets $B$ of $\U_0$,
\begin{equation}\label{eqn:invertingT}
T(T^{-1} B ) = B.
\end{equation}

We employ the well-ordering principle to define some sort of  inverse map associated with $T$.

\begin{defn}\label{defn:Tinverse}
For $x\in \U_0$, define the (quasi-)inverse function $\tau$ of $T$ as follows:
\[
\tau(x) = \min \{ z\in \U_0: Tz=x \}.
\]
\end{defn}

The reader should note that,
for $\xi(x)\in \U_0$,
$\tau(x)= \xi(x)$,
whereas for $\xi(x)\equiv 0 \pmod{3}$, we have $\tau(x)= S\xi(x)$.
To give an example, $\tau(5)=13$, whereas $\xi(5)=3$.
Further, $T\circ \tau$ is the identity map on $\U_0$,
whereas, in general, $\tau(Tx)\neq x$.

The next idea is to generate a series of equivalence relations and, 
hence, a series of quotient spaces and of partitions of $\U_0$.

\begin{defn}
For $x,y\in \U_0$ and $n\in \N_0$, we define the relation ``$\sim_n$'' on $\U_0$ as
\[
x\sim_n y \Leftrightarrow T^n x = T^n y.
\]
Further, we put $\bx_n = \left\{ z\in \U_0: T^n z= T^n x  \right\}$, and
 $\delta_n(x) = \min \bx_n$.
\end{defn}

For all $x\in \U_0$, and all $n\ge 0$,
we have $x\in \bx_n$,
hence $\bx_n\neq \emptyset$ and $\delta_n(x)\le x$.
The set $\bra{x}_0$ consists of the single point $x$.

\begin{lem}\label{lemma:A}
For all $x\in \U_0$ and for all $n\in \N_0$,
the following holds.
\begin{enumerate}
\item\label{lemma:A:1} The relation `$\sim_n$' is an equivalence relation on $\U_0$
	and the set $\bx_n$ is the equivalence class of $x$ 
	with respect to this equivalence relation.
	Further, $\bra{x}_n = \bra{\delta_n(x)}_n$.
\item\label{lemma:A:3} We have strict inclusion $\bx_n \subset \bx_{n+1}$.
\item\label{lemma:A:4} For all $n\ge 1$ and all $k\in \N_0$ such that $S^k x \in \U_0$,
	\[
	\bx_n = \bra{S^k x}_n.
	\]
\item\label{lemma:A:5} We have
	\[
	1\le \delta_n(x) \le \delta_{n-1}(x) \le \dots \le \delta_1(x) \le \delta_0(x) = x.
	\]
\item\label{lemma:A:6} For all $x\in \U_0$ and all $n\ge 0$,
\[
T^{-1} \bra{Tx}_n = \bra{x}_{n+1}.
\]
\end{enumerate}
\end{lem}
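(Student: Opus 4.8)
The plan is to dispatch the five assertions roughly in the order given; four of them are immediate from the definitions together with Corollary~\ref{cor:Sj}, and the only substantive point is the strictness of the inclusion in the second assertion.

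First I would check the first assertion: since equality of $T^n$-images is reflexive, symmetric and transitive, $\sim_n$ is an equivalence relation, and by definition its class through $x$ is $\{z\in\U_0:T^nz=T^nx\}=\bra{x}_n$; because $\delta_n(x)=\min\bra{x}_n$ belongs to $\bra{x}_n$ it is $\sim_n$-equivalent to $x$, so $\bra{\delta_n(x)}_n=\bra{x}_n$. Next, the inclusion $\bra{x}_n\subseteq\bra{x}_{n+1}$ follows by applying $T$ to the identity $T^nz=T^nx$, and from it the monotonicity chain drops out: $\delta_n(x)\in\bra{x}_n\subseteq\bra{x}_{n+1}$ forces $\delta_{n+1}(x)\le\delta_n(x)$, each $\delta_n(x)$ is a positive odd integer because it lies in $\U_0$, and $\delta_0(x)=x$ since $\bra{x}_0=\{x\}$. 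For the third assertion I would invoke Corollary~\ref{cor:Sj}: if $n\ge1$ and $S^kx\in\U_0$, then $T^n(S^kx)=T^{n-1}\bigl(T(S^kx)\bigr)=T^{n-1}(Tx)=T^nx$, so $\bra{S^kx}_n=\bra{x}_n$. For the fiber identity I would just unwind definitions: $z\in T^{-1}\bra{Tx}_n$ says $Tz\in\bra{Tx}_n$, i.e.\ $T^n(Tz)=T^n(Tx)$, i.e.\ $T^{n+1}z=T^{n+1}x$, which is exactly $z\in\bra{x}_{n+1}$, and every implication here is reversible.

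The one step that calls for a genuine argument is the strictness $\bra{x}_n\subsetneq\bra{x}_{n+1}$. Here I would use that $T:\U_0\to\U_0$ is surjective, so that each iterate $T^n:\U_0\to\U_0$ is surjective as well (with $T^0$ the identity). Set $u:=T^{n+1}x$, which lies in $\U_0$. The earlier lemma identifying the fibers of $T$ writes $\{z\in\U:Tz=u\}$ as the strictly increasing sequence $\bigl(S^k\xi(u)\bigr)_{k\ge0}$; since $S$ cycles the residue classes modulo $3$ with period $3$, infinitely many terms of this sequence avoid the multiples of $3$, so the fiber $\{z\in\U_0:Tz=u\}$ contains, besides $T^nx$, some further element $y\ne T^nx$. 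Pulling $y$ back through the surjection $T^n$ to a point $w\in\U_0$ with $T^nw=y$ then gives $T^{n+1}w=Ty=u=T^{n+1}x$ while $T^nw=y\ne T^nx$, i.e.\ $w\in\bra{x}_{n+1}\setminus\bra{x}_n$.

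I do not expect any serious obstacle: the whole lemma is bookkeeping with the single exception just described, and in that step the only thing to watch is that the relevant fibers of $T$ genuinely remain non-singleton after intersecting with $\U_0$ — which is precisely why the residue-permutation property of $S$ was recorded earlier.
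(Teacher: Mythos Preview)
Your proof is correct and follows essentially the same route as the paper. The only cosmetic difference is in the strictness step: the paper takes $y=T^nx$ and explicitly names the second fiber element over $T^{n+1}x$ as $Sy$ or $S^2y$ (according to the residue of $y$ modulo $3$) before pulling back through the surjection $T^n$, whereas you argue abstractly that the fiber $T^{-1}\{u\}\cap\U_0$ is infinite and hence contains a second point; both are the same idea.
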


\begin{proof}
Ad \ref{lemma:A:1}. This is easy to verify.

Ad  \ref{lemma:A:3}.
The inclusion $\bx_n \subseteq \bx_{n+1}$ is trivial.
It follows from the definition of these two sets.
In order to prove strict inclusion, put $y= T^n x$.
If $y\equiv 1 \pmod{3}$, 
then let $z\in \U_0$ be such that $T^{n}z = S y$.
Hence, $z\notin \bx_n$.
On the other hand, 
$T^{n+1}x = Ty = T(S y) = T^{n+1}z$,
which implies $z\in \bx_{n+1}$.
If $y\equiv 2 \pmod{3}$, 
then let $z\in \U$ be such that $T^{n}z = S^2 y$.
As above, we derive $z\in \bx_{n+1}\setminus \bx_n$.

Ad \ref{lemma:A:4}. From Corollary \ref{cor:Sj} it follows that,
for all $n\ge 1$, we have the identity $T^n = T^n \circ S^k$ on $\U$
and, hence, also on $\U_0$.
This implies $x\sim_n S^k x$, 
for all those $k\ge 0$ where $S^k x \in \U_0$.

Ad \ref{lemma:A:5}. Trivial.

Ad \ref{lemma:A:6}. 
Let $z\in T^{-1}\bra{Tx}_n$.
Then $Tz\in \bra{Tx}_n$,
which implies
$T^n(Tz)=T^{n+1}z = T^n(Tx)=T^{n+1}x$.
Hence, $z\in \bra{x}_{n+1}$.
This yields $T^{-1}\bra{Tx}_n \subseteq \bra{x}_{n+1}$.
For the converse, 
if $z\in \bra{x}_{n+1}$,
then $T^{n+1}z= T^n(Tz)= T^{n+1}x = T^n(Tx)$,
which implies $Tz\in \bra{Tx}_n$.
As a consequence, $z\in T^{-1}\bra{Tx}_n$.
We derive $\bra{x}_{n+1}\subseteq T^{-1}\bra{Tx}_n$.
\end{proof}

\begin{cor}\label{cor:images}
For all $x\in \U_0$, and for all $n\ge 0$,
\[
T \bx_{n+1} = \bra{Tx}_n, \quad 
	T^{-1}\bra{x}_n = \bra{\tau(x)}_{n+1}, \quad
	T\bra{\tau(x)}_{n+1}= \bra{x}_n.
\]
This is due to the surjectivity of $T$, 
see identity (\ref{eqn:invertingT}).
In addition, $T\bx_0=\bra{Tx}_0$.
\end{cor}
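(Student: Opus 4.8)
The plan is to derive all three identities of Corollary~\ref{cor:images} from Lemma~\ref{lemma:A}, part~\ref{lemma:A:6}, together with the surjectivity identity~(\ref{eqn:invertingT}), rather than arguing from scratch. First I would establish the first identity $T\bx_{n+1} = \bra{Tx}_n$: by Lemma~\ref{lemma:A}(\ref{lemma:A:6}) we have $\bra{x}_{n+1} = T^{-1}\bra{Tx}_n$, and since $\bra{Tx}_n$ is a subset of $\U_0$ and $T:\U_0\to\U_0$ is surjective, applying~(\ref{eqn:invertingT}) with $B = \bra{Tx}_n$ gives $T\bra{x}_{n+1} = T(T^{-1}\bra{Tx}_n) = \bra{Tx}_n$.

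Next I would obtain the second identity $T^{-1}\bra{x}_n = \bra{\tau(x)}_{n+1}$. The point is that $\tau(x)\in\U_0$ satisfies $T\tau(x) = x$ by Definition~\ref{defn:Tinverse} (recall $T\circ\tau$ is the identity on $\U_0$), so substituting $\tau(x)$ for $x$ in Lemma~\ref{lemma:A}(\ref{lemma:A:6}) yields $T^{-1}\bra{T\tau(x)}_n = \bra{\tau(x)}_{n+1}$, i.e.\ $T^{-1}\bra{x}_n = \bra{\tau(x)}_{n+1}$. The third identity $T\bra{\tau(x)}_{n+1} = \bra{x}_n$ is then immediate: apply~(\ref{eqn:invertingT}) once more, this time with $B = \bra{x}_n\subseteq\U_0$, to get $T\bra{\tau(x)}_{n+1} = T(T^{-1}\bra{x}_n) = \bra{x}_n$. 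Finally the supplementary claim $T\bx_0 = \bra{Tx}_0$ is trivial, since $\bx_0 = \{x\}$ and $\bra{Tx}_0 = \{Tx\}$, so both sides equal $\{Tx\}$.

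I do not expect any serious obstacle here; the corollary is essentially a repackaging of Lemma~\ref{lemma:A}(\ref{lemma:A:6}). The only point that needs a moment's care is making sure that the substitution $x \mapsto \tau(x)$ is legitimate, which it is because $\tau(x)\in\U_0$ and Lemma~\ref{lemma:A}(\ref{lemma:A:6}) is stated for all elements of $\U_0$, and that the set to which~(\ref{eqn:invertingT}) is applied genuinely lies inside $\U_0$ — but $\bra{Tx}_n$ and $\bra{x}_n$ are by definition subsets of $\U_0$, so this is automatic. One should also note that~(\ref{eqn:invertingT}) requires surjectivity of $T:\U_0\to\U_0$, which was established in the preceding lemma, so there is nothing further to verify.
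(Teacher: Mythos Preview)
Your proposal is correct and follows exactly the route the paper intends: the corollary is stated without a separate proof, merely pointing to the surjectivity identity~(\ref{eqn:invertingT}), and you have spelled out precisely how each of the three identities is obtained from Lemma~\ref{lemma:A}(\ref{lemma:A:6}) together with~(\ref{eqn:invertingT}) and the substitution $x\mapsto\tau(x)$.
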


\begin{cor}
For all $n\ge 0$,
we may partition $\U_0$ as follows:
\begin{equation}\label{eqn:partition}
\U_0 = \bigcup_{x\in \U_0} \bra{x}_n.
\end{equation}
\end{cor}

Due to the strict inclusion $\bra{x}_n \subset \bra{x}_{n+1}$,
if we pass from $n$ to $n+1$,
this will result in a `reduction'  in the number of different equivalence classes.
Hence,
if $n$ increases, we get less and less elements in the partitions
(\ref{eqn:partition}) of $\U_0$.
As we will see in Corollary \ref{cor:equivalentresults},
the 3x+1 conjecture is equivalent to a collapse of this 
 sequence of nested partitions to a trivial partition of $\U_0$ consisting
of a single set.

\begin{cor}
For all $x\in \U_0$,
the limit $\lim_{n\to \infty} \delta_n(x)$ exists.
This is due to the fact that the sequence of positive integers $(\delta_n(x))_{n\ge 0}$ is
decreasing and bounded from below by $1$, hence convergent.
\end{cor}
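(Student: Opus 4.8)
The plan is to apply the monotone convergence principle in the especially simple setting of integer sequences. Fix $x\in\U_0$. By Lemma~\ref{lemma:A}, part~\eqref{lemma:A:5}, the sequence $(\delta_n(x))_{n\ge 0}$ satisfies
\[
1\le \delta_n(x)\le \delta_{n-1}(x)\le \cdots \le \delta_1(x)\le \delta_0(x)=x,
\]
so it is non-increasing and every term lies in the finite set $\{1,2,\dots,x\}$ (recall that $\delta_0(x)=\min\bra{x}_0 = x$, since $\bra{x}_0=\{x\}$).

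A non-increasing sequence of positive integers that is bounded below can strictly decrease only finitely many times; hence it is eventually constant, i.e.\ there is an index $N=N(x)$ with $\delta_n(x)=\delta_N(x)$ for all $n\ge N$. Therefore the limit $\lim_{n\to\infty}\delta_n(x)$ exists and equals $\delta_N(x)$; in particular it is attained. This limiting value is precisely the quantity $\delta_\infty(x)$ that will be used below.

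I do not expect any genuine obstacle here. The two ingredients---monotonicity and a finite integer lower bound---are already supplied by Lemma~\ref{lemma:A}, and the passage to the limit for integer sequences is elementary. The only point worth recording explicitly is that the limit is not merely a real number but is one of the integers $\delta_N(x)$; the ``stabilization'' phrasing makes this transparent, whereas a bare appeal to completeness of $\R$ would still require the remark that a convergent integer-valued sequence is eventually constant.
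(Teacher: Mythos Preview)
Your proof is correct and follows exactly the approach sketched in the corollary itself: monotonicity from Lemma~\ref{lemma:A}\eqref{lemma:A:5} together with the lower bound $1$ forces convergence, and your observation that an integer-valued decreasing sequence is eventually constant is a welcome clarification. There is nothing to add.
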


We observe that the $3x+1$ conjecture is equivalent to  
$\lim_{n\to \infty} \delta_n(x) =1$ for all $x$ in $\U_0$.
It is also equivalent to $1\in \bigcup_{n\ge 0} \bx_n$,
for all $x$ in $\U_0$.

In the next step, we determine $\lim_{n\to \infty} \delta_n(x)$ and characterize
the union of the sets $\bx_n$, $n\ge 0$.

\begin{defn}
For $x,y\in \U_0$, we define the relation ``$\sim_\infty$'' on $\U_0$ as
\[
x\sim_\infty y \Leftrightarrow \exists n\in \N_0: \quad T^n x = T^n y.
\]
Further, put
$\bx_\infty = \left\{ z\in \U_0: \exists n\in \N_0 \text{ such that } T^n z= T^n x  \right\}$,
and  $\delta_\infty(x) = \min \bx_\infty$.
\end{defn}

\begin{lem}\label{lemma:B}
The following holds.
\begin{enumerate}
\item\label{lemma:B:1} The relation `$\sim_\infty$' is an equivalence relation on $\U_0$
and, for all $x\in \U_0$,
the set $\bx_\infty$ is the equivalence class 
of $x$ with respect to this equivalence relation.
Further, $\bx_\infty = \bra{\delta_\infty(x)}_\infty$,
and
	\[
	\bx_\infty = \bigcup_{n\ge 0} \bx_n.
	\]
\item\label{lemma:B:4} For all $x\in \U_0$,
	\[
	\delta_\infty(x) = \lim_{n\to \infty}  \delta_n(x).
	\]
\end{enumerate}
\end{lem}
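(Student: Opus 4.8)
The plan is to prove the two parts of Lemma \ref{lemma:B} in sequence, with the second part following almost immediately once the set-theoretic description in the first part is established.

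\medskip

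\noindent\textbf{Part \ref{lemma:B:1}.} First I would verify that `$\sim_\infty$' is an equivalence relation. Reflexivity and symmetry are immediate from the definition. For transitivity, suppose $x\sim_\infty y$ via $T^m x = T^m y$ and $y\sim_\infty z$ via $T^n y = T^n z$; then applying $T^{n}$ to the first equation and $T^{m}$ to the second gives $T^{m+n}x = T^{m+n}y = T^{m+n}z$, so $x\sim_\infty z$. That $\bra{x}_\infty$ is the equivalence class of $x$ is then just unwinding the definition. For the key identity $\bra{x}_\infty = \bigcup_{n\ge 0}\bra{x}_n$: the inclusion $\supseteq$ is clear, since $z\in\bra{x}_n$ means $T^n z = T^n x$, which witnesses $z\sim_\infty x$; conversely, if $z\in\bra{x}_\infty$, there is some $n$ with $T^n z = T^n x$, i.e. $z\in\bra{x}_n$. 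Finally, $\bra{x}_\infty = \bra{\delta_\infty(x)}_\infty$ follows because $\delta_\infty(x)\in\bra{x}_\infty$ (it is the minimum of that set, which is nonempty as it contains $x$), hence the two classes coincide, exactly as in Lemma \ref{lemma:A}\eqref{lemma:A:1}.

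\medskip

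\noindent\textbf{Part \ref{lemma:B:4}.} The idea is to use Part \ref{lemma:B:1} together with the monotonicity from Lemma \ref{lemma:A}\eqref{lemma:A:5}. Since $\bra{x}_n\subseteq\bra{x}_\infty$ for every $n$, we have $\delta_\infty(x) = \min\bra{x}_\infty \le \min\bra{x}_n = \delta_n(x)$, so $\delta_\infty(x)\le\lim_{n\to\infty}\delta_n(x)$ (the limit exists by the preceding corollary). For the reverse inequality, note that $\delta_\infty(x)\in\bra{x}_\infty = \bigcup_{n\ge 0}\bra{x}_n$, so $\delta_\infty(x)\in\bra{x}_N$ for some $N$; since the classes are nested increasingly, $\delta_\infty(x)\in\bra{x}_n$ for all $n\ge N$, whence $\delta_n(x) = \min\bra{x}_n \le \delta_\infty(x)$ for all $n\ge N$, and therefore $\lim_{n\to\infty}\delta_n(x)\le\delta_\infty(x)$. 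Combining the two inequalities gives equality.

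\medskip

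I do not expect a serious obstacle here; the statement is essentially a bookkeeping consequence of the nestedness of the classes $(\bra{x}_n)_n$ and the fact that a decreasing sequence of positive integers is eventually constant. The only point requiring a little care is making sure the ``increasing union'' description in Part \ref{lemma:B:1} is invoked correctly when passing the minimum to the limit in Part \ref{lemma:B:4} — specifically, that $\delta_\infty(x)$ actually lands in one of the finite-stage classes $\bra{x}_N$, rather than only in the limit class, which is what lets us conclude $\delta_n(x)$ stabilizes at $\delta_\infty(x)$ for large $n$.
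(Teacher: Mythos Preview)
Your proof is correct and follows essentially the same approach as the paper: Part~\ref{lemma:B:1} is handled by routine verification (the paper simply says ``easily verified''), and Part~\ref{lemma:B:4} is proved via the same two inequalities, using that $\delta_\infty(x)$ lies in some finite-stage class $\bra{x}_N$ and that every $\delta_n(x)$ lies in $\bra{x}_\infty$. The only cosmetic difference is that the paper names $\delta'(x)=\lim_{n\to\infty}\delta_n(x)$ and argues $\delta_\infty(x)\le\delta'(x)$ and $\delta'(x)\le\delta_\infty(x)$, whereas you phrase the same inequalities directly in terms of the limit.
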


\begin{proof}
Ad \ref{lemma:B:1}. 
This is easily verified.

Ad \ref{lemma:B:4}. 
Let $\delta'(x) =\lim_{n\to \infty} \delta_n(x)$.
Due to the fact that we are dealing with a convergent integer sequence,
there exists an integer $N$ such that for all $n\ge N$, $\delta'(x)=\delta_n(x)$.
From the fact $\delta_n(x)\in \bra{x}_n$,
 it follows that $\delta'(x)\in [x]_\infty$.
Hence, $\delta_\infty(x) \le \delta'(x)$.

On the other hand, for any $z\in \bra{x}_\infty$, there exists $n\in \N_0$ such that
$z\in [x]_n$. This implies that $z\ge \delta_n(x) \ge \delta'(x)$.
We note that, 
by definition, 
$\delta_\infty(x)\in \bra{x}_\infty$.
Consequently, $\delta_\infty(x) \ge \delta'(x)$.
\end{proof}

\begin{cor}\label{cor:equivalentresults}
The sets $\bra{x}_\infty$, $x\in \U_0$, form a partition of $\U_0$,
$
\U_0 = \bigcup_{x\in \U_0} \bra{x}_\infty.
$
The $3x+1$conjecture is equivalent to  each of the following statements:
\begin{enumerate}
\item $\U_0=\bra{1}_\infty$.
\item $\forall x\in \U_0: \quad \delta_\infty(x)=1$.
\end{enumerate}
\end{cor}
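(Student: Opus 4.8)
The plan is to establish the three assertions in turn, each of which follows quickly from the structural results already proved. First, the fact that the sets $\bra{x}_\infty$, $x\in\U_0$, partition $\U_0$ is an immediate consequence of Lemma \ref{lemma:B}(\ref{lemma:B:1}): since $\sim_\infty$ is an equivalence relation on $\U_0$, its equivalence classes are exactly the sets $\bra{x}_\infty$, and the equivalence classes of any equivalence relation on a set partition that set. So this part requires no real work beyond citing that lemma.

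Next I would prove the equivalence of the $3x+1$ conjecture with statement (1), namely $\U_0 = \bra{1}_\infty$. For the forward direction, assume the conjecture. Then for every $x\in\U_0$ there is a $k\in\N_0$ with $T^k x = 1 = T^k 1$ (using that $1$ is the fixed point of $T$, Remark \ref{rem:rem1}(3)), so $x\sim_\infty 1$, i.e. $x\in\bra{1}_\infty$; since $\bra{1}_\infty\subseteq\U_0$ trivially, we get $\U_0=\bra{1}_\infty$. Conversely, if $\U_0=\bra{1}_\infty$, then every $x\in\U_0$ satisfies $x\sim_\infty 1$, meaning there exists $n\in\N_0$ with $T^n x = T^n 1 = 1$; hence $x$ reaches $1$ under iteration of $T$. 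Together with the earlier corollary reducing the conjecture to the map $T\colon\U_0\to\U_0$, this gives the equivalence.

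Finally, for statement (2), I would combine (1) with Lemma \ref{lemma:B}. If $\U_0=\bra{1}_\infty$, then for each $x$ the class $\bra{x}_\infty$ equals $\bra{1}_\infty$, whose minimal element is $1$ (note $1\in\bra{1}_\infty$ and $1$ is the least element of $\U_0$), so $\delta_\infty(x)=1$. Conversely, if $\delta_\infty(x)=1$ for all $x$, then $1\in\bra{x}_\infty$ for every $x$; since $1\sim_\infty 1$ and $\sim_\infty$ is an equivalence relation, $\bra{x}_\infty = \bra{1}_\infty$ for all $x$, whence $\U_0=\bigcup_{x}\bra{x}_\infty=\bra{1}_\infty$, and we are back to statement (1). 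One may alternatively invoke Lemma \ref{lemma:B}(\ref{lemma:B:4}), $\delta_\infty(x)=\lim_{n\to\infty}\delta_n(x)$, together with the remark that the conjecture is equivalent to $\lim_{n\to\infty}\delta_n(x)=1$ for all $x\in\U_0$.

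There is no genuine obstacle here: the corollary is a bookkeeping consequence of the definitions and of Lemma \ref{lemma:B}, so the only thing to be careful about is keeping the chain of equivalences honest — in particular, not conflating ``$x$ reaches $1$'' with ``$\bra{x}_\infty=\bra{1}_\infty$'' without passing through the definition of $\sim_\infty$, and using the fixed-point property $T1=1$ at the step where one identifies $T^n 1$ with $1$.
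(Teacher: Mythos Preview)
Your proposal is correct and matches the paper's approach: the paper states this corollary without an explicit proof, relying implicitly on Lemma~\ref{lemma:B} and on the observation preceding it that the conjecture is equivalent to $\lim_{n\to\infty}\delta_n(x)=1$ for all $x\in\U_0$. Your argument spells out exactly this implicit reasoning, so there is nothing to add or correct.
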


Let us study the action of $T$ on the sets $\bra{x}_\infty$.

\begin{lem}\label{lem:imageOfxinf}
For all $x\in \U_0$,
\[
T^{-1}\bra{Tx}_\infty = \bra{x}_\infty.
\]

\end{lem}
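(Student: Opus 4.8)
The plan is to prove the two set inclusions separately, mirroring the structure of the proof of Lemma~\ref{lemma:A}(\ref{lemma:A:6}) but now with the ``$\exists n$'' quantifier built into the definition of $\bra{\cdot}_\infty$. The key observation driving everything is the shift identity $T^{n}(Tx) = T^{n+1}x$, which lets us move the exponent by one whenever we apply $T$ to an argument.

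For the inclusion $T^{-1}\bra{Tx}_\infty \subseteq \bra{x}_\infty$, take $z\in \U_0$ with $Tz\in \bra{Tx}_\infty$. By definition of $\bra{Tx}_\infty$ there is an $n\in\N_0$ with $T^n(Tz) = T^n(Tx)$, i.e.\ $T^{n+1}z = T^{n+1}x$. Taking the witness $n+1$ shows $z\sim_\infty x$, hence $z\in\bra{x}_\infty$. For the reverse inclusion $\bra{x}_\infty \subseteq T^{-1}\bra{Tx}_\infty$, take $z\in\bra{x}_\infty$, so $T^m z = T^m x$ for some $m\in\N_0$. If $m\ge 1$ write $m = n+1$ and conclude $T^n(Tz) = T^n(Tx)$, so $Tz\in\bra{Tx}_\infty$ and $z\in T^{-1}\bra{Tx}_\infty$. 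The only delicate point is the case $m=0$: then $z = x$ literally, and we still need $Tz = Tx\in\bra{Tx}_\infty$, which holds trivially since $x\sim_\infty x$ (take $n=0$). So in every case $z\in T^{-1}\bra{Tx}_\infty$.

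Alternatively, and perhaps more cleanly for the exposition, I would derive the statement from what is already proved: Lemma~\ref{lemma:B}(\ref{lemma:B:1}) gives $\bra{x}_\infty = \bigcup_{n\ge 0}\bra{x}_n$ and $\bra{Tx}_\infty = \bigcup_{n\ge 0}\bra{Tx}_n$. Since preimage commutes with unions, $T^{-1}\bra{Tx}_\infty = \bigcup_{n\ge 0} T^{-1}\bra{Tx}_n$, and by Lemma~\ref{lemma:A}(\ref{lemma:A:6}) each term equals $\bra{x}_{n+1}$. Hence $T^{-1}\bra{Tx}_\infty = \bigcup_{n\ge 0}\bra{x}_{n+1} = \bigcup_{n\ge 1}\bra{x}_n$, and because $\bra{x}_0\subseteq\bra{x}_1$ (Lemma~\ref{lemma:A}(\ref{lemma:A:3})) this union equals $\bigcup_{n\ge 0}\bra{x}_n = \bra{x}_\infty$.

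There is no real obstacle here; the proof is a short bookkeeping argument. The one spot that requires a moment's care is exactly the boundary index: in the elementary argument it is the $m=0$ case, and in the union argument it is remembering that the reindexing $n\mapsto n+1$ drops the $n=0$ term, which must then be recovered from the nesting $\bra{x}_0\subset\bra{x}_1$. I would write out the union version as the main proof since it makes the role of the already-established lemmas transparent and avoids any case split.
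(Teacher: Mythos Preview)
Your proposal is correct and essentially matches the paper's argument: the paper presents your first approach as a single chain of equivalences $z\in T^{-1}\bra{Tx}_\infty \Leftrightarrow \cdots \Leftrightarrow z\in\bra{x}_\infty$, absorbing the boundary case you flag into the final step $\exists n\ge 0:\ z\in\bra{x}_{n+1}\Leftrightarrow z\in\bra{x}_\infty$ (which holds precisely because $\bra{x}_0\subset\bra{x}_1$). Your union-based alternative is also fine and simply makes the dependence on Lemmas~\ref{lemma:A}(\ref{lemma:A:6}) and~\ref{lemma:B}(\ref{lemma:B:1}) more explicit.
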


\begin{proof}
We have the following chain of equivalences:
\begin{align*}
z\in T^{-1}\bra{Tx}_\infty &\Leftrightarrow Tz\in \bra{Tx}_\infty
	\Leftrightarrow \exists n\ge 0: Tz\in \bra{Tx}_n\\
	&\Leftrightarrow \exists n\ge 0: T^{n+1}z=T^{n+1}x\\
	&\Leftrightarrow \exists n\ge 0: z\in \bra{x}_{n+1}
	\Leftrightarrow  z\in \bra{x}_\infty.
\end{align*}
\end{proof}

\begin{cor}\label{cor:furtherimages}
In analogy to Corollary \ref{cor:images}, 
for all $x\in \U_0$,
we have
\[
T \bx_\infty = \bra{Tx}_\infty, \quad 
	T^{-1}\bra{x}_\infty = \bra{\tau(x)}_\infty, \quad
	T\bra{\tau(x)}_\infty= \bra{x}_\infty.
\]
\end{cor}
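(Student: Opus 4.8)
The three identities in Corollary~\ref{cor:furtherimages} are the $\sim_\infty$-analogues of the three identities in Corollary~\ref{cor:images}, and the plan is to deduce them from Lemma~\ref{lem:imageOfxinf} in exactly the same way that Corollary~\ref{cor:images} was deduced from Lemma~\ref{lemma:A}(\ref{lemma:A:6}), namely by invoking the surjectivity of $T$ on $\U_0$ in the form of identity~(\ref{eqn:invertingT}). So the core fact I would use is $T(T^{-1}B)=B$ for every $B\subseteq \U_0$, together with the already-established relation $T^{-1}\bra{Tx}_\infty=\bra{x}_\infty$.

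First I would prove $T\bra{x}_\infty=\bra{Tx}_\infty$. Apply $T$ to both sides of Lemma~\ref{lem:imageOfxinf}: the left side becomes $T(T^{-1}\bra{Tx}_\infty)$, which equals $\bra{Tx}_\infty$ by~(\ref{eqn:invertingT}) since $\bra{Tx}_\infty\subseteq\U_0$; the right side is $T\bra{x}_\infty$. Next I would prove the second identity $T^{-1}\bra{x}_\infty=\bra{\tau(x)}_\infty$. Since $T\circ\tau$ is the identity on $\U_0$ (Definition~\ref{defn:Tinverse} and the remark following it), we have $T\tau(x)=x$, hence $\bra{x}_\infty=\bra{T\tau(x)}_\infty$; now Lemma~\ref{lem:imageOfxinf} applied with $\tau(x)$ in place of $x$ gives $T^{-1}\bra{T\tau(x)}_\infty=\bra{\tau(x)}_\infty$, i.e. $T^{-1}\bra{x}_\infty=\bra{\tau(x)}_\infty$. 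Finally, the third identity $T\bra{\tau(x)}_\infty=\bra{x}_\infty$ follows by applying $T$ to the second identity and using~(\ref{eqn:invertingT}) once more, exactly as in the first step, since $\bra{x}_\infty\subseteq\U_0$.

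There is essentially no hard step here; the statement is a formal consequence of Lemma~\ref{lem:imageOfxinf} plus surjectivity. The one point that needs a word of care is the replacement of $x$ by $\tau(x)$ in Lemma~\ref{lem:imageOfxinf}: this is legitimate because $\tau(x)\in\U_0$ by Definition~\ref{defn:Tinverse}, so the lemma applies verbatim to it, and then $T\tau(x)=x$ lets one rewrite $\bra{T\tau(x)}_\infty$ as $\bra{x}_\infty$. I would also note explicitly that each $\bra{y}_\infty$ is a subset of $\U_0$ (it is an equivalence class of $\sim_\infty$ on $\U_0$), which is what makes~(\ref{eqn:invertingT}) applicable at the two places where $T$ is applied to $T^{-1}$ of such a class. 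With those observations in place the proof is three lines, mirroring the proof of Corollary~\ref{cor:images}.
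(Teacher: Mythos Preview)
Your proposal is correct and follows exactly the approach indicated by the paper: the corollary is stated without a detailed proof, merely pointing to the analogy with Corollary~\ref{cor:images} and hence to identity~(\ref{eqn:invertingT}) combined with Lemma~\ref{lem:imageOfxinf}. Your three steps (apply $T$ to Lemma~\ref{lem:imageOfxinf}, substitute $\tau(x)$ for $x$, apply $T$ again) are precisely the intended unpacking of that analogy.
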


\begin{rem}
For the analog equivalence classes on the set $\U$ instead of $\U_0$,
one has  $T\bra{x}_\infty \subset \bra{Tx}_\infty$, 
i.e., \emph{strict} inclusion,
as the inverse image $T^{-1}\{z\}$ of a point $z\in \bra{Tx}_\infty$  may be empty.
\end{rem}

In view of Lemma \ref{lem:imageOfxinf} and Corollary \ref{cor:furtherimages}
we may introduce the following map.

\begin{defn}\label{defn:inducedtransformation}
Let $\Uinf$ denote the quotient space $\U_0/\sim_\infty$, i.e., 
the set of equivalence classes associated with the equivalence relation `$\sim_\infty$'.
The induced map $\Tstar$ on  $\Uinf$ is defined as
\[
\Tstar \brainf{x} = \brainf{Tx}, \quad x\in \U_0.
\]
\end{defn}

\begin{thm}\label{thm:propertiesOfT}
The map $\Tstar$ on $\Uinf$ has the following properties:
\begin{enumerate}
\item \label{lem:propertiesOfT:0}$\Tstar$ is well defined.
\item \label{lem:propertiesOfT:1} $\Tstar$ is a bijection on $\Uinf$.
\item \label{lem:propertiesOfT:2} $\Tstar$ has the unique fixed point $\brainf{1}$: 
\[
\brainf{x}=\Tstar \brainf{x} \Rightarrow \brainf{x}=\brainf{1}.
\] 
\item \label{lem:propertiesOfT:3} It $x$ is a periodic point of $T$,
$T^kx=x$, with $k\ge 1$ the minimum period of $x$, then 
$\brainf{x}$ is a periodic point of $\Uinf$ with $T^{*k} \brainf{x}=\brainf{x}$.
\end{enumerate}
\end{thm}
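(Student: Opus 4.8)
The plan is to verify the four assertions in turn, reducing each to facts already established about the equivalence relation $\sim_\infty$ and the maps $T$ and $\tau$.

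\medskip

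\noindent\textbf{Well-definedness.} First I would check that $\Tstar\brainf{x}=\brainf{Tx}$ does not depend on the representative. So suppose $\brainf{x}=\brainf{y}$, i.e.\ $x\sim_\infty y$; by definition there is $n\in\N_0$ with $T^n x = T^n y$. Applying $T$ gives $T^n(Tx)=T^n(Ty)$, so $Tx\sim_\infty Ty$ and hence $\brainf{Tx}=\brainf{Ty}$. This is the easy step.

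\medskip

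\noindent\textbf{Bijectivity.} For injectivity: if $\Tstar\brainf{x}=\Tstar\brainf{y}$, then $\brainf{Tx}=\brainf{Ty}$, so $Tx\sim_\infty Ty$, meaning $T^m(Tx)=T^m(Ty)$ for some $m$; thus $T^{m+1}x=T^{m+1}y$, so $x\sim_\infty y$ and $\brainf{x}=\brainf{y}$. For surjectivity: given a class $\brainf{y}$ with $y\in\U_0$, the quasi-inverse $\tau$ of Definition~\ref{defn:Tinverse} satisfies $T(\tau(y))=y$, so $\Tstar\brainf{\tau(y)}=\brainf{T\tau(y)}=\brainf{y}$. (Equivalently one can invoke $T\bra{\tau(x)}_\infty=\bra{x}_\infty$ from Corollary~\ref{cor:furtherimages}.) I do not expect any obstacle here; the content is exactly that $\sim_\infty$ was designed to absorb the many-to-one behavior of $T$ together with the surjectivity $T\U_0=\U_0$.

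\medskip

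\noindent\textbf{Unique fixed point.} Suppose $\brainf{x}=\Tstar\brainf{x}=\brainf{Tx}$, i.e.\ $x\sim_\infty Tx$, so there is $n$ with $T^n x = T^n(Tx)=T^{n+1}x$. Writing $y=T^n x$, this says $Ty=y$, so by Remark~\ref{rem:rem1}(3) the only fixed point of $T$ is $1$, whence $y=1$. Then $T^n x = 1$, which means $x\sim_\infty 1$ (using $T^n 1 = 1$ as well), i.e.\ $\brainf{x}=\brainf{1}$. Conversely $\Tstar\brainf{1}=\brainf{T1}=\brainf{1}$, so $\brainf{1}$ is indeed fixed. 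The main (and only mild) point is realizing that $x\sim_\infty Tx$ forces an actual fixed point of $T$ somewhere down the orbit, which then must be $1$.

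\medskip

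\noindent\textbf{Periodic points.} If $T^k x = x$ with $k\ge 1$, then applying $\Tstar$ repeatedly and using well-definedness, $T^{*k}\brainf{x}=\brainf{T^k x}=\brainf{x}$, so $\brainf{x}$ is periodic for $\Tstar$. (One can additionally remark, though the statement does not ask for it, that the $\Tstar$-period of $\brainf{x}$ divides $k$; in fact, since by part~\ref{lem:propertiesOfT:2} the only $\Tstar$-fixed class is $\brainf{1}$, for $x\ne 1$ a genuinely periodic point of $T$ the class $\brainf{x}$ is a genuinely periodic point of $\Tstar$ of period $\ge 2$.) No obstacle is anticipated; this is a direct translation along the quotient map. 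Overall the only place demanding a moment's thought is the fixed-point argument, and even there the work is done by Remark~\ref{rem:rem1}(3).
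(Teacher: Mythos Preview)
Your proof is correct and follows essentially the same route as the paper's: each of the four parts is argued in the same way, via the defining property of $\sim_\infty$, the quasi-inverse $\tau$ for surjectivity, and Remark~\ref{rem:rem1}(3) for the fixed point. Your extra remarks (the converse that $\brainf{1}$ is fixed, and the period-divisibility comment) are fine additions but not needed for the statement as written.
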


\begin{proof}
Ad  \ref{lem:propertiesOfT:0}. 
We have to show that the value of $\Tstar$ is independent of the representative of the
equivalence class $\bra{x}_\infty$.
Suppose that $\bra{x}_\infty = \bra{z}_\infty$.
Then
there exists $n\ge 0$ such that $T^n x = T^n z$.
Hence, $T^{n+1} x = T^{n+1} z$, which implies
$Tz\in \bra{Tx}_n\subset \brainf{Tx}$.
Thus,  $\bra{Tx}_\infty = \bra{Tz}_\infty$.

Ad \ref{lem:propertiesOfT:1}. 
By Lemma \ref{lem:imageOfxinf}, $\Tstar \brainf{x} = \Tstar \brainf{z}$ implies $\brainf{Tx}=\brainf{Tz}$.
Hence, there exists $n\ge 0$ such that $T^n (Tx) = T^n (Tz)$.
As a consequence, $z\in \bra{x}_{n+1}\subset \brainf{x}$,
which implies $\brainf{z}=\brainf{x}$.
Thus, $\Tstar$ is injective on $\Uinf$.

Let $\brainf{y}$ be an arbitrary element of $\Uinf$.
Then $\brainf{y}= \brainf{T\tau(y)} =  \Tstar \brainf{\tau(y)}$.
Thus, $\Tstar$ is surjective.

Ad \ref{lem:propertiesOfT:2}. 
Suppose that $\brainf{x}=\Tstar \brainf{x}= \brainf{Tx}$.
Then there exists $n\ge 0$ such that $T^n x = T^n(Tx) = T(T^n x)$.
As a consequence, the element $T^n x$ is a fixed point in $\U_0$ under $T$.
This implies $T^n x = 1$ (see Remark \ref{rem:rem1}, Part 3.).
Due to $T^n 1 = 1$, we have $\brainf{x}=\brainf{1}$.

Ad \ref{lem:propertiesOfT:3}. The property $T^k x = x$ implies
$\brainf{T^k x} = T^{*k} \brainf{x} = \brainf{x}$.
\end{proof}



\section{Appendix}
The idea underlying our approach to the $3x+1$ conjecture was to find a
suitable \emph{metric space} $X$ in the form of some quotient space 
$X=\{ \overline{x}: x\in \U_0\}$,
and a \emph{contraction} $T^*$ on $X$ that is intrinsically related to 
the map $T$ in the sense that convergence of the sequences $(T^{*k}\overline{x})_{k\ge 0}$,
$\overline{x}\in X$, to the unique
fixed point of $T^*$ implies the convergence of the sequences $(T^kx)_{k\ge 0}$ to $1$,
i.e., the validity of the  $3x+1$ conjecture.

We were unable to realize this `dream' of applying the Banach fixed-point theorem,
because we have not found an appropriate pair $(X,T^*)$.
For example, $X=\U_{0, \infty}$ can easily be made into a metric space but it is the
proof of the contraction property of the induced map $T^*$ with respect to the chosen
metric where we failed.

The following concepts allow a somewhat deeper understanding of 
the dynamics of the map $T$.

\begin{defn}
The \emph{invertible} accelerated Collatz function
$f: \U_0 \rightarrow \U_0$ is defined as follows.
For $x\in \U_0$ and for $k\ge 0$, 
define $f^kx=T^k x$.
For $k<0$, put $f^kx = \tau^{-k}(x)$.
\end{defn}

Consider the following equivalence relation on $\U_0$:
\[
x\sim y \Leftrightarrow \exists m,n\ge 0: \ T^mx=T^ny.
\]
We write $\bx$ for the equivalence class of $x\in \U_0$, 
and get the following.

\begin{lem}\label{lem:relation2}
Let the relation $`\sim'$
be defined as above
and write $\Ustrich$ for the quotient space $\U_0/\sim$.
Then
\begin{enumerate}
\item For all $x\in \U_0$, the sets $\bx$ are $T$-invariant in the following sense:
\[
T^{-1}\bx = \bx, \quad T\bx = \bx = \bTx.
\]

\item For all $x\in \U_0$,
	\[
	\bx = \bigcup_{k\in \Z} \bra{f^kx}_\infty 
	= \cdots \cup \bra{\tau(x)}_\infty \cup \bx\cup\bTx\cup \cdots
	\]
\item We have $\bra{1} = \bra{1}_\infty$.
\item The map $T': \Ustrich \rightarrow \Ustrich$, $T'\bx=\bTx$ is well-defined
and every element $\bx$ of $\Ustrich$ is a fixed point of $T'$.
\end{enumerate}
\end{lem}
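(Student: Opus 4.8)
The plan is to verify the four assertions in order, each reducing to a short argument building on the already-established machinery for $\sim_\infty$.

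\medskip

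\textbf{Item (1): $T$-invariance of $\bra{x}$.}
First I would establish $T\bra{x} \subseteq \bra{x}$ and $T^{-1}\bra{x} \subseteq \bra{x}$, which together with surjectivity of $T$ on $\U_0$ and identity (\ref{eqn:invertingT}) will give all the claimed equalities. For $T\bra{x}\subseteq \bra{x}$: if $z\sim x$, say $T^m z = T^n x$, then $T^m(Tz) = T(T^m z) = T(T^n x) = T^{n+1}x$, so $Tz \sim x$. For $T^{-1}\bra{x}\subseteq\bra{x}$: if $Tz \sim x$, say $T^m(Tz) = T^n x$, then $T^{m+1} z = T^n x$, so $z \sim x$. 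Hence $T^{-1}\bra{x} = \bra{x}$ by applying $T$ and using (\ref{eqn:invertingT}), and $T\bra{x} = T(T^{-1}\bra{x}) = \bra{x}$. Finally $\bra{Tx} = \bra{x}$ because $Tx \sim x$ directly (take $m=1$, $n=0$).

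\medskip

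\textbf{Item (2): the fibre decomposition.}
The inclusion $\bigcup_{k\in\Z}\bra{f^k x}_\infty \subseteq \bra{x}$ is immediate: each $f^k x$ satisfies $f^k x \sim x$ (for $k\ge 0$ because $T^k x = T^k x$; for $k<0$ because $T^{-k}(f^k x) = T^{-k}\tau^{-k}(x) = x$, using $T\circ\tau = \mathrm{id}$). For the reverse inclusion, take $z\sim x$ with $T^m z = T^n x$. I would argue that $z \sim_\infty f^{n-m}(x)$: if $n\ge m$, then $T^m z = T^n x = T^m(T^{n-m}x)$ shows $z \sim_\infty T^{n-m}x = f^{n-m}x$; if $n<m$, then $T^m z = T^n x$ with $m-n>0$, and since $\tau^{m-n}$ is a right inverse branch, one checks $z \sim_\infty \tau^{m-n}(x) = f^{-(m-n)}x$ by noting that $T^{m}z = T^{n}x = T^{m}\big(\tau^{m-n}(x)\big)$ — here the key point, which needs care, is that $T^{m-n}\tau^{m-n} = \mathrm{id}$ on $\U_0$ so $T^n x = T^n(T^{m-n}\tau^{m-n}x) = T^m(\tau^{m-n}x)$. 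Either way $z$ lies in some $\bra{f^k x}_\infty$.

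\medskip

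\textbf{Items (3) and (4).}
For (3): $\bra{1}_\infty \subseteq \bra{1}$ is trivial. Conversely if $z\sim 1$, then $T^m z = T^n 1 = 1$ for some $m,n$ (since $1$ is the fixed point), so $z \sim_\infty 1$ via $T^m z = 1 = T^m 1$, giving $z\in\bra{1}_\infty$; alternatively this follows from (2) together with $f^k 1 = 1$ for all $k$ and Theorem~\ref{thm:propertiesOfT}(\ref{lem:propertiesOfT:2}). For (4): well-definedness of $T'$ means $\bra{x} = \bra{z} \Rightarrow \bra{Tx} = \bra{Tz}$, which is immediate since $\bra{Tx} = \bra{x}$ and $\bra{Tz} = \bra{z}$ by item (1); and $T'\bra{x} = \bra{Tx} = \bra{x}$ by the same identity, so every class is a fixed point.

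\medskip

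\textbf{Main obstacle.} The only genuinely delicate point is the reverse inclusion in item (2) when $n<m$, i.e.\ relating $z$ to a \emph{negative} iterate $f^{-(m-n)}x = \tau^{m-n}(x)$. One must use carefully that $\tau$ is only a right inverse ($T\circ\tau = \mathrm{id}$, but $\tau\circ T \ne \mathrm{id}$ in general), so the identity $T^{m-n}\circ\tau^{m-n} = \mathrm{id}_{\U_0}$ has to be invoked in the correct direction; writing $T^n x = T^{m}(\tau^{m-n} x)$ and $T^m z = T^n x$ then yields $z\sim_\infty \tau^{m-n}(x)$ cleanly. Everything else is routine bookkeeping with the definitions of $\sim$, $\sim_\infty$, and the surjectivity relation (\ref{eqn:invertingT}).
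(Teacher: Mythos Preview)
Your proof is correct and supplies exactly the routine verifications the paper omits (the paper's own proof reads in its entirety: ``The proof is straightforward and employs the techniques introduced in Section~\ref{s:preliminaries}''). One cosmetic remark: in item~(1) your phrase ``by applying $T$ and using~(\ref{eqn:invertingT})'' is a bit compressed---the cleanest route is to note that $T\bra{x}\subseteq\bra{x}$ already gives $\bra{x}\subseteq T^{-1}\bra{x}$ directly, which combined with your other inclusion yields $T^{-1}\bra{x}=\bra{x}$, and then $T\bra{x}=T(T^{-1}\bra{x})=\bra{x}$ via~(\ref{eqn:invertingT}).
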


\begin{proof}
The proof is straightforward and employs the techniques introduced in Section \ref{s:preliminaries}.
\end{proof}

\begin{rem}
The reader should note the behavior of the class $\bra{1}$,
which is remarkably different from all other classes.
\end{rem}

\begin{rem}
For $x\in \U_0$, 
put $\delta(x) = \min \bx$.
The $3x+1$ conjecture is equivalent $\U_0=\bra{1}$.
Further, it is equivalent to $\delta(x)=1$ for all $x\in \U_0$.
\end{rem}

\begin{rem}
Lemma \ref{lem:relation2} tells us that every set $\bx$ is $T$-invariant,
which is to say that $T^{-1}\bx = \bx$.
In addition to this result,
Theorem \ref{thm:propertiesOfT} shows that $\bra{1}=\bra{1}_\infty$ is the only
$T$-invariant set of the form $\bx_\infty$.
These two results call out for the application of concepts from the theory
of  dynamical systems, for example from ergodic theory.
Let $(X, \mathcal{B}, m)$ be a probability space.
A measure preserving map $f: X\rightarrow X$ is called ergodic
if the only $f$-invariant elements $A$ of $\mathcal{B}$,
i.e., $f^{-1} A = A$,
are those with $m(A)=0$ or $m(A)=1$.
It is well known that ergodicity of $f$ is equivalent to each of the following
properties: (i) for every $A\in \mathcal{B}$ with $m(A)> 0$ we have
$m(\cup_{k=1}^\infty f^{-k}A)=1$, or 
(ii) for every $A,B\in \mathcal{B}$ with $m(A)>0$ and $m(B)>0$,
there exists $k>0$ with
$m(f^{-k}A\cap B) > 0$
(see, for example, Walters\cite[Theorem 1.5]{Walters82a}).
In our case, we would have to prove ergodicity for $f=T$,
where $\U_0$ would have to be equipped with an appropriate probability space structure.
We would then be able to derive the $3x+1$ conjecture for almost all $x$.
\end{rem}

The following two notions allow some kind of ``bookkeeping'' 
when we iterate the map
$f$.
With every  $x\in \U_0$, we may associate two infinite matrices 
as follows.

\begin{defn}
Let $x\in \U_0$. 
We define the matrix of equivalence classes associated with $x$ as
$C(x)  = \left(  c_{k,n} \right)_{k\in \Z, n\ge 0}$,
where $c_{k,n} = \bra{f^k x}_n$.

In addition, we define the matrix of minimal elements associated with $x$ as
$M(x) = \left( \mu_{k,n}\right)_{k\in \Z, n\ge 0}$,
with $\mu_{k,n} = \min c_{k,n}$.
Let $\delta^*(x)$ denote the minimal element of the matrix $M(x)$.
\end{defn}

Clearly, we have $\mu_{k,n} = \delta_n(f^k x)$,
and $\delta^*(x)=\delta(x)$.
Note that if we fix the row index $k$,
then the row $(\mu_{k,n})_{n\ge 0}$ in $M(x)$ has a constant tail eventually, 
because the convergent sequence $(\delta_n(f^k x))_{n\ge 0}$ is constant
from  some index $N=N(k)$ onwards,
with every element then being equal to $\delta_\infty(f^k x)$.

There is even further `tail'-structure in $M(x)$: 
suppose that $\delta^*(x)$ is equal to $\mu_{k,n}$, where $k$ and $n$ are minimal
with this property (in this order).
Then $\delta^*(x)=\mu_{k,m}$ for all $m\ge n$. That is to say,
the $k$-th row becomes eventually constant.

From the discussion above it follows that it is sufficient to prove
the $3x+1$ conjecture for the subset
$\{\delta_\infty(x): x\in \U_0  \}$
of $\U_0$ or, alternatively,
$\{\delta_1(x): x\in \U_0  \}$.
These facts suggest the following notion.

\begin{defn}
A subset $V$ of $\U_0$ is called sufficient
if the validity of the $3x+1$ conjecture for every element of $V$
implies the validity of the $3x+1$ conjecture for every element of $\U_0$.
\end{defn}

\begin{lem}
The set $\left\{ x\in \U_0: 1\le \nu_2(3x+1)\le 4 \right\}$ is sufficient.
\end{lem}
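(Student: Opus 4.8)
The plan is to show that every $x \in \U_0$ with $\nu_2(3x+1) \geq 5$ can be replaced, for the purposes of the conjecture, by a strictly smaller element of $\U_0$ that lies in the candidate sufficient set, so that an induction on the size of $x$ reduces the general case to the asserted set. The key observation is Corollary~\ref{cor:Sj}: since $T = T \circ S^k$ on $\U$, the iterates $S^k x$ all have the same $T$-orbit as $x$, and conversely, by the lemma describing $T^{-1}\{Tx\}$, every preimage of $Tx$ is of the form $S^k \xi(Tx)$. So if $\nu_2(3x+1) = 2 + 2j$ or $2j+3$ with $j \geq 1$ (the parity of $\nu_2(3x+1)$ being forced by the residue of $Tx \bmod 3$ via equation~(\ref{eqn:basic})), then $x = S^j \xi(Tx)$ for the appropriate base point, and $\xi(Tx)$ itself satisfies $\nu_2(3\,\xi(Tx)+1) \in \{2,3\}$ by Lemma~\ref{lem:inverseofy} together with the remark following Definition~\ref{defn:Tinverse}. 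Hence $Tx$ has a preimage in the set $\{x : 1 \le \nu_2(3x+1) \le 4\}$, and that preimage is $\le x$ (it is $\xi(Tx)$ or $S\,\xi(Tx)$, both at most $x$ since $S$ is strictly increasing on $\U$ and $x$ is one of the higher iterates).

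Concretely, I would argue as follows. Let $V = \{x \in \U_0 : 1 \le \nu_2(3x+1) \le 4\}$ and suppose the $3x+1$ conjecture holds for every element of $V$. I claim it holds for every $x \in \U_0$, by strong induction on $x$. If $x \in V$ there is nothing to prove. If $x \notin V$, then $\nu_2(3x+1) \ge 5$, and writing $y = Tx$, the base point $\xi(y)$ satisfies $\nu_2(3\,\xi(y)+1) \in \{2,3\}$, so $w := \tau(y) \in \{\xi(y), S\xi(y)\}$ has $\nu_2(3w+1) \in \{2,3,4,5\}$; a short case check (splitting on $\xi(y) \bmod 3$) shows in fact $w \in V$ or $w$ reduces by one more application of $S^{-1}$ into $V$. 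In any case $Tw = y = Tx$, and $w \le x$ because $x = S^k \xi(y)$ for some $k \ge 2$ while $w$ is $\xi(y)$ or $S\xi(y) = S^1\xi(y)$, both strictly below $S^k \xi(y)$ by Remark~\ref{rem:rem2}. Actually to set up the induction cleanly I would instead observe directly: $x = S^k \xi(y)$ with $k \ge 1$ (since $\nu_2(3x+1) \ge 4 > 2$, so $x \ne \xi(y)$, and if $\xi(y) \equiv 0 \bmod 3$ replace $\xi(y)$ by $S\xi(y)$), hence $S^{k-1}$ applied to the base point — call it $x'$ — satisfies $x' \in \U_0$, $Tx' = y$, $x' < x$, and by induction the conjecture holds for $x'$; since $Tx' = Tx$, it holds for $x$.

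The main technical point — not really an obstacle, more a bookkeeping nuisance — is the interplay between $\xi(y)$ possibly being $\equiv 0 \pmod 3$ and the need to land inside $\U_0$ and inside $V$ simultaneously; one must verify that after correcting $\xi(y)$ by at most one factor of $S$ to reach $\U_0$, the resulting element either already lies in $V$ or is still a proper $S$-iterate of something in $V$, so that the strict-descent step $x' < x$ survives. This is exactly why the bound in $V$ is $\nu_2(3x+1) \le 4$ rather than $\le 3$: the extra room absorbs the single possible $S$-correction ($\nu_2$ jumps by $2$ under $S$, from $2$ or $3$ up to $4$ or $5$, and the value $5$ case is handled because such an $x$ is then $S$ of an element with $\nu_2 = 3 \le 4$). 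I would record the parity constraint on $\nu_2(3x+1)$ imposed by $Tx \bmod 3$ as a preliminary remark, then the descent is immediate from Remark~\ref{rem:rem2} and the lemma computing $T^{-1}\{Tx\}$, and the conclusion follows by induction.
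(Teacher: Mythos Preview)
Your core idea --- replace $x$ by $\tau(Tx)$, which has the same $T$-image and bounded $\nu_2$ --- is exactly the paper's argument, but a numerical slip derails your execution. You write $\nu_2(3\xi(y)+1) \in \{2,3\}$; in fact Lemma~\ref{lem:inverseofy} gives $3\xi(y)+1 = 4y$ or $2y$ (according as $y\equiv 1$ or $2\pmod 3$), so $\nu_2(3\xi(y)+1) \in \{1,2\}$. Consequently $\tau(y) \in \{\xi(y), S\xi(y)\}$ already has $\nu_2(3\tau(y)+1) \in \{1,2,3,4\}$, and $\tau(Tx) \in V$ \emph{always}: no induction, case split, or ``further $S^{-1}$-reduction'' is needed. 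The paper's proof is precisely this observation: $\delta_1(x) = \tau(Tx)$ lies in $V$ and satisfies $\bra{x}_\infty = \bra{\delta_1(x)}_\infty$, so $\{\delta_1(x): x \in \U_0\} \subseteq V$ is sufficient.

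Your fallback induction also has a gap as written: you assert that $x' = S^{k-1}\xi(y)$ lies in $\U_0$, but since $S$ cycles the residues modulo $3$, one may have $S^{k-1}\xi(y) \equiv 0 \pmod 3$ even when $x = S^k\xi(y) \in \U_0$ (e.g.\ $\xi(y) \equiv 2$ forces $S\xi(y) \equiv 0$). This is easily patched --- since $k \ge 2$ whenever $\nu_2(3x+1)\ge 5$, at least one of $S^{k-1}\xi(y)$, $S^{k-2}\xi(y)$ is in $\U_0$ --- but with the corrected $\nu_2$-values the whole induction is superfluous.
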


\begin{proof}
Let $x\in \U_0$ be arbitrary.
Trivially, we have $\bx_\infty = \bra{\delta_1(x)}_\infty$.
As a consequence, 
the set $\{\delta_1(x): x\in \U_0  \}$ is sufficient.
Further, $\delta_1(x)=\tau(Tx)$.

From Lemma \ref{lem:inverseofy} it follows that
$\nu_2(3\xi(x)+1)\in \{1,2\}$, 
for all $x\in \U_0$.
Due to the fact that either $\tau(x)=\xi(x)$, or $\tau(x)=S\xi(x)$,
we have $\nu_2(3\tau(x)+1)\in \{1,2,3,4\}$.
\end{proof}

The reader might want to compare this result with Sander \cite[Theorem 1]{Sander90a}.
For further, very extensive results on sufficient sets
we refer the reader to Monks\cite{Monks13a}.

\subsection*{Acknowledgements}
The author would like to thank Harry (Hillel) Furstenberg, who brought this problem to his attention
in several personal discussions some decades ago at CIRM in Luminy, France.

\def\cdprime{$''$} \def\cdprime{$''$}


\begin{small}
\noindent\textbf{Author's address:}\\
Peter Hellekalek,
Dept. of Mathematics, University of Salzburg, Hellbrunnerstrasse 34, 5020 Salzburg, Austria\\
E-mail: \texttt{peter.hellekalek@sbg.ac.at}
\end{small}

\end{document}